\def\oo#1{{}_{(#1)}}
\def\Cur{\mathop {\fam 0 Cur}\nolimits}
\def\End{\mathop {\fam 0 End}\nolimits}
\def\Cend{\mathop {\fam 0 Cend}\nolimits}
\def\Conf{\mathop {\fam 0 Conf}\nolimits}
\def\Pois{\mathop {\fam 0 Pois}\nolimits}
\def\Lie{\mathop {\fam 0 Lie}\nolimits}
\newtheorem{proposition}{Proposition}
\newtheorem{theorem}{Theorem}
\newtheorem{corollary}{Corollary}
\title[Poisson algebras as conformal modules]{Defining relations
and Gr\"obner--Shirshov bases of Poisson algebras 
as~of~conformal modules}
\author{P. S. Kolesnikov, A. S. Panasenko}
\address{Sobolev Institute of Mathematics, Novosibirsk, Russia\\
Novosibirsk State University, Novosibirsk, Russia}
\thanks{The work is supported by Mathematical Center in Akademgorodok}
\begin{document}

\begin{abstract}
We study the relation between Poisson algebras and representations 
of Lie conformal algebras. We establish a setting for the 
calculation of a Gr\"obner--Shirshov basis in a module 
over an associative conformal algebra and apply this technique 
to Poisson algebras considered as conformal modules over appropriate 
associative envelopes of current Lie conformal algebras. As a result, 
we obtain a setting for the calculation of a Gr\"obner--Shirshov basis 
in a Poisson algebra.
\end{abstract}

\maketitle

\section{Introduction}

Conformal algebras (also known as Lie vertex algebras) appear in 
the theory of vertex operator algebras as a formal language describing 
the singular part of the operator product expansion (OPE) \cite{Kac1998}. 
The entire OPE of vertex operators may be completely recovered 
from the singular part by adding the only operation of normally ordered 
product which is known to be left-symmetric \cite{BK2002}.

In this note, we consider a relation between Poisson algebras and 
representations of conformal algebras. As a possible application, the 
equality problem is the class of Poisson algebras reduces to 
the equality problem for modules over associative conformal algebras.

Suppose $P$ is a Poisson algebra over a field $\Bbbk $ of characteristic zero
and $L$ is a Lie subalgebra of $P$.
That is, $P$ is a commutative algebra equipped with a Lie bracket $\{\cdot,\cdot\}$
satisfying the Leibniz rule 
\[
\{x,yz\} = y\{x,z\} +z\{x,y\},\quad x,y,z\in P.
\]
For example, if $L$ is a Lie algebra then the
associated graded algebra $\mathrm{gr}\,U(L)$ of the 
 universal enveloping associative algebra $U(L)$
is a Poisson algebra denoted $P(L)$, the bracket extends 
the Lie product on~$L$.

For $a\in L$, $u\in P$, denote by $(a\oo_{\lambda } u)$ the polynomial 
$\{a,u\}+\lambda au$ in a formal variable $\lambda $ with coefficients in~$P$.
Denote $H=\Bbbk [\partial ]$, and extend the operation $(\cdot \oo{\lambda } \cdot )$
to the free $H$-modules $H\otimes L$, $H\otimes P$ as follows:
\[
 (f(\partial )a\oo{\lambda } g(\partial )u ) = f(-\lambda )g(\partial +\lambda ) (a\oo\lambda u).
\]
This is straightforward to compute \cite{Kol2020} that 
\begin{equation}\label{eq:ReprLieJacobi}
(x\oo\lambda (y\oo\mu w)) - (y\oo\mu (x\oo\lambda w)) = [x\oo\lambda y]\oo{\lambda +\mu} w
\end{equation}
for $x,y\in H\otimes L$, $w\in H\otimes P$, where $[x\oo\lambda y]$ in the right-hand side 
is the conformal Lie bracket in the current conformal algebra 
$\Cur L$.

Therefore, in particular, given a Lie algebra $L$ the free $H$-module generated by
the Poisson enveloping algebra $P(L)$ is a conformal module over $\Cur L$.
As in the case of ordinary Lie algebras, a conformal module structure over a conformal algebra
gives rise to a module structure over its universal associative conformal envelope \cite{Ro2000}. 
In contrast to ordinary algebras, the universal enveloping associative conformal algebra for a given 
conformal algebra is not unique, and the choice of appropriate one is determined by the
locality function on the conformal linear maps corresponding to the particular representation.

In our case, consider the conformal linear maps
$\rho(a)_\lambda  \in \Cend (H\otimes P)$, $a\in L$, sending 
$w\in P$ to $(a\oo\lambda w) = \{a,w\} + \lambda aw$.
Evaluate the $\lambda $-product of two such maps:
\begin{multline*}
(\rho(a) \oo\lambda \rho(b))_\mu  = \rho(a)_\lambda (\rho(b)_{\mu-\lambda } w) \\
= \{a,\{b,w\}\} +\lambda a \{b,w\} +(\mu-\lambda ) \{a, bw\} + \lambda (\mu-\lambda )abw.
\end{multline*}
The result is a quadratic polynomial in $\lambda $, so the locality level of $\rho(a)$ and $\rho(b)$
is $N=3$. Hence, $H\otimes P$ is a conformal module over $U(\Cur L, N=3)$. The latter 
associative conformal algebra was studied in \cite{KK2020}. 

Given a Lie algebra $L$, 
we find the explicit set of defining relation of $H\otimes P(L)$ as of conformal $\Cur L$-module.
In particular, if $L$ is the free Lie algebra $\Lie (X)$ generated by a set $X$ then $P(L)$ is the 
free Poisson algebra $\Pois (X)$. For a set $S\subset \Pois (X)$, denote by $I$ the ideal in $\Pois (X)$ generated by $S$. Then $H\otimes I\subset H\otimes \Pois (X)$ is a conformal submodule 
over $\Cur \Lie (X)$. Therefore, finding a Gr\"obner--Shirshov basis in the conformal module 
allows solving the equality problem in $\Pois (X)$. This is an alternative approach relative to 
the theory of Gr\"obner--Shirshov bases in Poisson algebras \cite{BCZ19}.

\section{Conformal endomorphisms and universal associative envelopes}

Let $L$ be a (Lie) conformal algebra, i.e., a left $H$-module equipped with a $\lambda $-bracket 
\[
[\cdot\oo\lambda \cdot ] : L\otimes L \to H[\lambda ]\otimes_H L
\]
which is sesqui-linear 
and satisfies skew-commutativity 
\[
[x\oo\lambda y] = -[y\oo{-\lambda-\partial} x], \quad x,y\in L
\]
and Jacobi identity 
\[
[ x \oo\lambda [y\oo\mu z]] - [y\oo\mu [ x \oo\lambda z]] = [ [x\oo\lambda y]\oo{\lambda+\mu } z ],
\quad
x,y,z\in L.
\]

A representation of $L$ on a left $H$-module $M$ is defined by sesqui-linear $\lambda $-operation 
\[
(\cdot \oo\lambda \cdot ): L\otimes M \to H[\lambda ]\otimes _H M
\]
which satisfies \eqref{eq:ReprLieJacobi} for all $x,y\in L$, $w\in M$.

In other words, a representation is a map sending $x\in L$ to the operation 
$\rho(x) = (x\oo\lambda \cdot ): M\to H[\lambda ]\otimes _H M$. 
The operations $\rho(x)$, $x\in L$, are conformal linear operators on $M$ in the sense of 
\cite{Kac1998}, i.e., $\rho : L\to \Cend M$. Recall that $\Cend M$ has a $\lambda $-operation 
$(\cdot \oo\lambda \cdot )$ which has not necessarily polynomial values:
if 
$\rho(x) = (x\oo\lambda \cdot )$, $\rho(y) = (y\oo\lambda \cdot )$ then 
\[
\begin{aligned}
(\rho(x)\oo\lambda \rho(y)):{}&  M \to H[[\lambda ]] [\mu] \otimes _H M, \\
 & w\mapsto (x\oo\lambda (y\oo{\mu-\lambda} w)),
\quad w\in M.
\end{aligned}
\]
If for every $x,y\in L$ the image of $(\rho(x)\oo\lambda \rho(y))$ is a polynomial in $\lambda $
(in particular, this is the case when $M$ is a finitely generated $H$-module) then 
by the Dong's Lemma (see, e.g., \cite{Kac1998}) $\rho(L)$ generates an associative 
conformal subalgebra $E=E(L,\rho)$ in $\Cend M$. 

Denote by $E^{(-)}$ the commutator Lie conformal algebra based on $E$:
\[
 [f\oo\lambda g] = (f\oo\lambda g) - (g\oo{-\lambda -\partial } f),\quad f,g\in E.
\]
The relation \eqref{eq:ReprLieJacobi} and the definition of  $(\rho(x)\oo\lambda \rho(y))$
imply $[\rho(x)\oo\lambda \rho(y)] = \rho([ x \oo\lambda y])$ for $x,y\in L$.
Therefore, $E$ is an associative envelope of $L$. 

Every associative envelope of a Lie conformal algebra is an image of an appropriate 
universal enveloping  associative conformal algebra \cite{Ro2000}. 
The choice of the universal envelope is determined by the degrees of 
the polynomials
$(\rho(x)\oo\lambda \rho(y))$, $x,y\in L$. If $\deg_\lambda (\rho(x)\oo\lambda \rho(y)) <N(x,y)$ 
then $E =E(L,\rho )$  is an image of the associative conformal algebra $U(L,N)$.

As $M$  is a conformal module over $L$ then the universal property of $U(L,N)$ 
implies that there exists a representation of $U(L,N)$ on $M$ extending $\rho $. 
Note that the canonical map $L\to U(L,N)$ is not necessarily injective.

For example, if $L=\Cur\mathfrak g$ is the current Lie conformal algebra over a Lie algebra $\mathfrak g$
and $M=L$ is the regular conformal $L$-module then 
$(\rho(a)\oo\lambda \rho(b)) = \mathrm{ad}\, a \mathrm{ad}\, b \in \End \mathfrak g$, 
$a,b\in \mathfrak g$,
so $N(a,b)=1$. The corresponding universal enveloping associative conformal algebra 
$U(\Cur\mathfrak g, N=1)$ 
is just 
$\Cur U(\mathfrak g)\mathfrak g$.

For the same $L$, if $M=H\otimes P(\mathfrak g)$, and $(a\oo\lambda w) = [a,w] + \lambda aw$, $a\in \mathfrak g$, 
$w\in P(\mathfrak g)$, then $N(a,b)\le 3$ for $a,b\in \mathfrak g$. Hence, the corresponding envelope 
is an image of $U(\Cur \mathfrak g, N=3)$.
Hence, 
$H\otimes P(\mathfrak g)$ is a conformal module over the associative conformal algebra $U(\Cur \mathfrak g, N=3)$. Obviously, it is generated by the single element $1=1\otimes 1$. The problem addressed in this paper is to determine 
the defining relations and find a complete (confluent) set of rewriting rules for this conformal module.

\section{Gr\"obner--Shirshov bases of conformal modules}

Let $C$ be a conformal algebra generated by a set $X$. Then $C$ is an image of an appropriate 
free associative conformal algebra \cite{Ro1999} generated by $X$ relative to a locality 
function $N: X\times X\to \mathbb Z_+$. Denote this free system by $\Conf(X,N)$. The latter may be presented as follows \cite{Ko2020}.

 Denote by $A(X)$ the ``ordinary'' associative algebra generated by the set 
\[
 B(X)=\{\partial \}\cup \{L_n^a,R_n^a \mid a\in X,\,n\in \mathbb Z_+\}
\]
with the defining relations
\begin{align}\label{eq:rel_A(X)1}
 & L_n^a\partial - \partial L_n^a -n L_{n-1}^a,  \\
 & R_n^a\partial - \partial R_n^a -n R_{n-1}^a,  \label{eq:rel_A(X)2} \\
 & R_n^aL_m^b - L_m^b R_n^a, \label{eq:rel_A(X)3}
\end{align}
For a given function $N:X\times X\to \mathbb Z_+$ 
consider the left $A(X)$-module $M(X,N)$
generated by $X$ with the following relations:
\begin{align}\label{eq:mod_A(X)1}
  & L_n^ab, \quad n\ge N(a,b),\\
  & R_m^ba - \sum\limits_{s= 
0}^{N(a,b)-m}(-1)^{m+s}\frac{1}{s!}\partial^{s}L_{m+s}^a b,
  \quad m\in \mathbb Z_+, \label{eq:mod_A(X)2}
\end{align}
where $a,b\in X$.

\begin{proposition}[\cite{Ko2020}]\label{cor:FreeBasis_module}
The free associative conformal algebra $\Conf(X,N)$ is an 
an $A(X)$-module relative to 
\[
\begin{gathered}
(a\oo\lambda u) = \sum\limits_{n\ge 0} \dfrac{\lambda^n}{n!} L_n^a u,
\quad 
(u\oo{-\lambda-\partial} a) = \sum\limits_{n\ge 0} \dfrac{\lambda^n}{n!} R_n^a u, \\
\quad u\in \Conf(X,N),\ a\in X, n\ge 0.
\end{gathered}
\]
Moreover,  $\Conf(X,N)$ 
and $M(X,N)$ are isomorphic as $A(X)$-modules.
\end{proposition}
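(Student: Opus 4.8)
The plan is to realize the asserted isomorphism as the map $\phi$ that is the identity on the generating set $X$, and to establish it as a bijective $A(X)$-module homomorphism. This splits into three tasks: equip $\Conf(X,N)$ with the stated $A(X)$-action, show that $\phi$ is a well-defined surjective homomorphism, and prove injectivity by a normal-form argument matched against the known linear basis of $\Conf(X,N)$.

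First I would verify that the operators $\partial$, $L_n^a$, $R_n^a$ on $\Conf(X,N)$ --- determined by $L_n^a u = a\oo{n} u$ and by reading off the coefficients of $(u\oo{-\lambda-\partial} a)$ for $R_n^a$ --- satisfy the defining relations \eqref{eq:rel_A(X)1}--\eqref{eq:rel_A(X)3} of $A(X)$. Relations \eqref{eq:rel_A(X)1} and \eqref{eq:rel_A(X)2} are the coefficientwise form of the sesqui-linearity identities $(a\oo\lambda \partial u) = (\partial+\lambda)(a\oo\lambda u)$ and $(\partial u\oo{-\lambda-\partial} a) = (\partial+\lambda)(u\oo{-\lambda-\partial} a)$; extracting the coefficient of $\lambda^n/n!$ produces precisely the stated commutators with $\partial$. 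Relation \eqref{eq:rel_A(X)3} expresses that right multiplication by $a$ commutes with left multiplication by $b$, which is the operator form of associativity of $\Conf(X,N)$: here the substitution $\lambda\mapsto -\lambda-\partial$ defining the right product is exactly what turns the shifted associativity identity into a plain commutation of coefficients. This makes $\Conf(X,N)$ a left $A(X)$-module.

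Next I would check that $\phi$ is well defined, i.e. that the module relations \eqref{eq:mod_A(X)1} and \eqref{eq:mod_A(X)2} hold in $\Conf(X,N)$. Relation \eqref{eq:mod_A(X)1} is just the defining locality $a\oo{n} b = 0$ for $n\ge N(a,b)$. For \eqref{eq:mod_A(X)2} I would expand $(a\oo{-\lambda-\partial} b) = \sum_{k\ge 0}\frac{(-\lambda-\partial)^k}{k!}\,a\oo{k} b$ and collect the coefficient of $\lambda^m/m!$, obtaining $R_m^b a = \sum_{s\ge 0}\frac{(-1)^{m+s}}{s!}\partial^s L_{m+s}^a b$; since $L_{m+s}^a b$ vanishes once $m+s\ge N(a,b)$, the sum truncates at $s=N(a,b)-m$, which is exactly \eqref{eq:mod_A(X)2}. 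Surjectivity of $\phi$ is then immediate: every left-normed product $a_1\oo{n_1}(\cdots(a_{k-1}\oo{n_{k-1}} a_k)\cdots)$ and its $\partial$-derivatives arise from the generators by applying $L_{n_i}^{a_i}$ and $\partial$, and such elements span $\Conf(X,N)$.

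The substantive part, and the expected main obstacle, is injectivity. The plan is a normal-form reduction in $M(X,N)$: apply \eqref{eq:rel_A(X)3} to move every $R$-operator past all $L$-operators so that it acts directly on a generator, apply \eqref{eq:mod_A(X)2} to rewrite each resulting $R_m^b x$ as a $\partial$-$L$-combination on generators, apply \eqref{eq:rel_A(X)1}--\eqref{eq:rel_A(X)2} to move all $\partial$'s to the outside, and finally apply \eqref{eq:mod_A(X)1} to discard the monomials whose innermost index violates locality. This exhibits $M(X,N)$ as spanned by the elements $\partial^m L_{n_1}^{a_1}\cdots L_{n_{k-1}}^{a_{k-1}} a_k$ with $0\le n_{k-1} < N(a_{k-1},a_k)$ and the remaining indices free. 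Finally, $\phi$ sends this spanning set bijectively onto the known linear basis of $\Conf(X,N)$ --- the $\partial$-derivatives of left-normed words carrying the single locality constraint $n_{k-1}<N(a_{k-1},a_k)$ --- and, the target being a basis, the spanning set maps to a linearly independent family, whence $\phi$ is injective and therefore an isomorphism. The delicate point is to confirm that the reduction terminates with exactly this spanning set and that no further relation of $M(X,N)$ collapses it, which is guaranteed a posteriori by the known size of the basis of $\Conf(X,N)$.
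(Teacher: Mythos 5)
A preliminary remark: the paper itself does not prove this proposition --- it is quoted from \cite{Ko2020} --- so your argument has to stand on its own. Its first two stages do: the operators $\partial$, $L_n^a$, $R_n^a$ on $\Conf(X,N)$ satisfy \eqref{eq:rel_A(X)1}--\eqref{eq:rel_A(X)3} (these are exactly the coefficient forms of sesquilinearity and of associativity under the substitution $\lambda\mapsto-\lambda-\partial$), the images of \eqref{eq:mod_A(X)1} and \eqref{eq:mod_A(X)2} vanish in $\Conf(X,N)$, so the map $\phi:M(X,N)\to\Conf(X,N)$ is a well-defined surjective $A(X)$-module homomorphism; and your rewriting argument correctly shows that $M(X,N)$ is \emph{spanned} by the words $\partial^s L_{n_1}^{a_1}\cdots L_{n_{k-1}}^{a_{k-1}}a_k$ with only the innermost index constrained by $n_{k-1}<N(a_{k-1},a_k)$.

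The gap is in the injectivity step, and it is fatal as written: your description of the linear basis of $\Conf(X,N)$ is incorrect, so the claimed bijection between your spanning set and a basis does not exist. In the free associative conformal algebra the locality constraints are \emph{not} confined to the innermost index. Take $N\equiv 1$: conformal associativity gives $a\oo1(b\oo0 c)=(a\oo1 b)\oo0 c+(a\oo0 b)\oo1 c$, while $(a\oo0 b)\oo1 c=a\oo0(b\oo1 c)=0$; hence $a\oo1(b\oo0 c)=0$ in $\Conf(X,1)$, although the word $L_1^aL_0^b c$ satisfies your ``last index only'' condition. (In fact the known basis of $\Conf(X,N)$ --- Roitman's for constant locality, \cite{Ko2020} in general --- bounds \emph{every} index $n_i$, not only $n_{k-1}$.) Consequently your spanning set of $M(X,N)$ is mapped by $\phi$ onto a linearly \emph{dependent} family, and the ``a posteriori size count'' you appeal to is both circular and false. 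The substantive content of the proposition, which your proof skips, is precisely to derive inside $M(X,N)$, by compositions of \eqref{eq:mod_A(X)2} with \eqref{eq:rel_A(X)3} and \eqref{eq:mod_A(X)1}, the additional relations (such as $L_1^aL_0^b c=0$ for $N\equiv1$) that cut the spanning set down to one matching the true basis of $\Conf(X,N)$ --- i.e., to complete \eqref{eq:mod_A(X)1}--\eqref{eq:mod_A(X)2} to a confluent rewriting system; alternatively, injectivity can be obtained by equipping $M(X,N)$ itself with an associative conformal product of locality $N$ on the generators and invoking the universal property of $\Conf(X,N)$ to produce an inverse map. Without one of these ingredients the proof does not go through.
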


A relation in $\Conf (X,N)$ may be rewritten as a relation in the free $A(X)$-module generated by $X$. 
Finding a Gr\"obner--Shirshov basis for such a module \cite{KangLee2000} 
is the same as finding the Gr\"obner--Shirshov 
basis for an associative conformal algebra. Hence, every associative conformal algebra $C$ may be 
presented as a quotient of $M(X,N)$ with a Gr\"obner--Shirshov basis 
$S\subset A(X)\otimes \Bbbk X$.

Now, consider a (left) conformal module $M$ over $C$ generated by a set $Y$
relative to a locality function $N': X\times Y\to \mathbb Z_+$.
The defining relations $T$ of $M$ may be written as elements of the free $A(X)$-module 
generated by $Y$ in the same way as it is done with the relations of~$C$. 

The following statement describes the setting for Gr\"obner--Shirshov bases computation 
in $M$ which is less technical than proposed in \cite{ChenLu2020}. Moreover, our technique is based on 
the ``ordinary'' associative algebra and thus the computations may be performed with either of existing 
computer algebra packages.

Let $C$ be an associative conformal algebra generated by a set $X$ (relative to a locality function $N$) with defining relations 
$S\subset A(X)\otimes \Bbbk X$. 
Suppose $M$ is a left conformal $C$-module generated by a set $Y$ (relative to a locality function $N'$) with defining relations $T\subset A(X)\otimes \Bbbk Y$.
Then the split null extension $C\oplus M$ is an associative conformal algebra 
generated by $X\cup Y$ relative to the locality function 
$N$ that extends the locality on $X\times X$ and $X\times Y$
with $N(Y,X)=N(Y,Y)=0$.

\begin{theorem}\label{thm:GSB-mod}
The defining relations of $C\oplus M$ are 
$S\cup T$ along with 
$L_n^y u$, $y\in Y$, $n\ge 0$, $u\in A(X\cup Y)\otimes \Bbbk(X\cup Y)$.
\end{theorem}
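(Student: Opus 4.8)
The plan is to realize $C\oplus M$ as a quotient of the free associative conformal algebra on $X\cup Y$ and to pin down the kernel by exhibiting an explicit inverse. By Proposition~\ref{cor:FreeBasis_module} the free object $\Conf(X\cup Y,N)$ is the $A(X\cup Y)$-module $M(X\cup Y,N)$; write $F$ for this free system and let $\bar F$ be the quotient of $F$ by the conformal ideal generated by $S\cup T\cup\{L_n^y u\}$, so that $\bar F$ is again an associative conformal algebra. Since the split null extension is already known (from the paragraph preceding the theorem) to be an associative conformal algebra generated by $X\cup Y$ with the indicated locality, it is a homomorphic image of $F$, and it suffices to prove $\bar F\cong C\oplus M$.

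First I would check that the listed relations hold in $C\oplus M$, which produces a surjective homomorphism $\phi:\bar F\to C\oplus M$ determined by $x\mapsto x$ for $x\in X$ and $y\mapsto y$ for $y\in Y$. Indeed, the relations in $S$ involve only $X$ and are the defining relations of $C$; the relations in $T\subset A(X)\otimes\Bbbk Y$ are exactly the module relations of $M$; and $\phi(L_n^y u)=(y\oo n\phi(u))=0$ because $y\in M$ and the left action of $M$ on $C\oplus M$ is trivial. Surjectivity is clear, as $X\cup Y$ generates $C\oplus M$.

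The substance of the argument is the construction of an inverse $\psi:C\oplus M\to\bar F$. The natural homomorphism $\Conf(X,N)\to\bar F$ induced by $X\hookrightarrow X\cup Y$ annihilates $S$ and hence factors through a homomorphism $C\to\bar F$; sending $y\mapsto\bar y$ and using the resulting $C$-action on $\bar F$ gives, after checking that $T$ is killed, a $C$-module map $M\to\bar F$. I would then set $\psi(c,m)=\bar c+\bar m$, and this is a homomorphism of conformal algebras once I verify that the cross terms vanish, i.e.\ that $(\bar m\oo\lambda\bar f)=0$ for every $\bar m$ in the image $\bar M$ of $M$ and every $\bar f\in\bar F$. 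This is precisely what kills the products $M\cdot C$ and $M\cdot M$, which are zero in the split null extension.

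This vanishing is the main obstacle, and I expect to obtain it by propagating the generator relations through conformal associativity. Every element of $\bar M$ is a combination of left-normed products $(\bar a_1\oo{\lambda_1}(\cdots(\bar a_k\oo{\lambda_k}\bar y)\cdots))$ with $a_i\in X$, $y\in Y$, so I would induct on $k$. The base case $\bar m=\bar y$ is immediate from the relations $L_n^y u=0$, which say $(\bar y\oo n\bar f)=0$ for all $n$ and all $\bar f\in\bar F$ (this is why the relations are imposed for every $u$, not merely on generators). For the inductive step, associativity in $\bar F$ gives $((\bar a\oo\lambda \bar m')\oo{\lambda+\mu}\bar f)=(\bar a\oo\lambda(\bar m'\oo\mu \bar f))$, whose right-hand side vanishes by the inductive hypothesis; letting $\mu$ vary shows left multiplication by $(\bar a\oo\lambda \bar m')$ is zero. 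Hence left multiplication by every element of $\bar M$ is zero, so $\psi$ is a well-defined homomorphism. Finally $\phi$ and $\psi$ agree with the identity on the generators $X\cup Y$ and on $C$ and $M$ respectively, so they are mutually inverse, giving $\bar F\cong C\oplus M$ and proving that $S\cup T\cup\{L_n^y u\}$ is a defining set of relations. The one bookkeeping point to watch throughout is the translation between the $A(X\cup Y)$-module symbols $L_n^z,R_n^z$ and the conformal products $(\cdot\oo n\cdot)$ via Proposition~\ref{cor:FreeBasis_module}, together with sesqui-linearity in the $\partial$-action.
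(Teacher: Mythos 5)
Your proposal is correct, but it follows a genuinely different route from the paper's proof. The paper stays entirely inside the presentation by the ordinary associative algebra $A(X\cup Y)$: it shows, by rewriting monomials of the normal form $\partial^s L_{n_1}^{x_1}\cdots L_{n_k}^{x_k}R_{m_1}^{z_1}\cdots R_{m_p}^{z_p}$ against the listed relations, that every element of $A(X\cup Y)\otimes(\Bbbk X\oplus\Bbbk Y)$ reduces either to an element of $A(X)\otimes\Bbbk X$ or to an element of $A(X)\otimes\Bbbk Y$; since $C$ and $M$ are precisely the quotients of these two spaces by $S$ and by $T$, the quotient by the stated relations is spanned by a copy of $C$ and a copy of $M$, and the canonical surjection onto $C\oplus M$ is forced to be an isomorphism. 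You instead argue by universal properties in both directions: the surjection $\phi:\bar F\to C\oplus M$, and an inverse $\psi$ assembled from a homomorphism $C\to\bar F$ and a $C$-module map $M\to\bar F$, the crux being your vanishing lemma that left multiplication by any element of the image of $M$ is zero in $\bar F$, proved by induction on left-normed words via conformal associativity, starting from the relations $L_n^y u$ taken for \emph{all} $u$ (your remark that this is exactly why the relations are imposed for every $u$, not just on generators, is the right observation and mirrors the role these relations play in the paper's rewriting). Both arguments are sound. What the paper's computation buys is the explicit reduction statement — normal forms landing in $A(X)\otimes\Bbbk X$ or $A(X)\otimes\Bbbk Y$ — which is what Corollary~\ref{cor:GSB_mod} and the later Gr\"obner--Shirshov computations actually reuse (keeping the relations containing a single letter from $Y$); your isomorphism alone does not directly deliver that normal-form picture. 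What your argument buys is conceptual economy: no case analysis on monomials, with the isomorphism coming from functoriality; the price is reliance on the existence and universal properties of free associative conformal algebras and of free conformal $C$-modules with prescribed locality (standard, but should be cited, e.g.\ to \cite{Ro1999,Ro2000,KangLee2000}), plus two small points you deferred that should be made explicit: closure of your vanishing set under $\partial$, via $((\partial w)\oo\lambda f)=-\lambda(w\oo\lambda f)$, and the fact that the image of $M$ in $\bar F$ is spanned by left-normed words $(\bar a_1\oo{n_1}(\cdots(\bar a_k\oo{n_k}\partial^s\bar y)\cdots))$, which again uses associativity together with sesqui-linearity.
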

 
\begin{proof} 
Recall that $(M\oo\lambda M) = (M\oo\lambda C) = 0$ in $C\oplus M$.
Note that \eqref{eq:rel_A(X)1}--\eqref{eq:rel_A(X)3} 
is a Gr\"obner--Shirshov basis, so  $A(X)\subset A(X\cup Y)$
and $A(Y)\subset A(X\cup Y)$.

It is sufficient to prove that an arbitrary 
element of $A(X\cup Y)\otimes (\Bbbk X \oplus \Bbbk Y)$ 
is equivalent modulo the relations mentioned in the statement either to 
an element of $A(X)\otimes \Bbbk X$ or to an element of $A(X)\otimes \Bbbk Y$.

First, assume $f \in A(X,Y)\otimes \Bbbk Y$, $f = u\otimes y$, $u$ is a monomial in $A(X\cup Y)$. 
Without loss of generality we may suppose 
\[
 u = \partial ^s L_{n_1}^{x_1}\dots L_{n_k}^{x_k} R_{m_1}^{z_1}\dots R_{m_p}^{z_p},
\]
where $x_i\in X$, $z_j\in X\cup Y$. 
If $z_p\in X$ then the image of $u\otimes y $ is zero due to the relation
\eqref{eq:mod_A(X)2} which allows rewriting
 $R_n^xy$ via $L_n^yx$.
 If $z_p\in Y$ then one may apply the relation in \eqref{eq:mod_A(X)2} and obtain zero again due to $L_n^y$.
Hence, $u$ contains only $L_{n_i}^{x_i}\in A(X)$ as required.

Next, assume  $f = u\otimes x$, $u$ is a monomial in $A(X\cup Y)$, $x\in X$. 
Present $u$ in the same form as above and choose maximal $j=1,\dots, p$ such that 
$z_j\in Y$. (If there is no such $j$ then $f$ belongs to $A(X)\otimes \Bbbk X$.)
Then $R_{m_{j+1}}^{z_{j+1}}\dots R_{m_p}^{z_p}x $ may be rewritten in terms of $L_n^z$, $z\in X$, $n\ge 0$ via  \eqref{eq:mod_A(X)2}. One may interchange $R_{m_j}^{z_j}$ 
with all these operators via \eqref{eq:rel_A(X)3} and thus reduce $f$ to a linear combination 
of words ending with $R_m^{z_j} z$, $z\in X$. The latter rewite to the expressions ending with $z_j$.
If there exists at least one more $z_l \in Y$ ($l=1,\dots, j-1$) then $f$ is equivalent to zero as shown in the previous paragraph. Otherwise, $f$ is equivalent to an expression from  $A(X)\otimes \Bbbk Y$ as required.
\end{proof}

\begin{corollary}\label{cor:GSB_mod}
A Gr\"obner--Shirshov basis of a conformal module $M$ over an associative conformal algebra $C$ 
consists of all those relations from a Gr\"obner--Shirshov basis of $C\oplus M$ that 
contain a single letter $y\in Y$.
\end{corollary}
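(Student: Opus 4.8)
The plan is to derive the corollary from Theorem~\ref{thm:GSB-mod} by grading the split null extension according to the number of letters from $Y$ that occur. On the free module $A(X\cup Y)\otimes\Bbbk(X\cup Y)$ let $\deg_Y$ count the total number of occurrences of letters from $Y$, both in the operator word from $A(X\cup Y)$ and in the generator. With respect to $\deg_Y$ the split null extension $C\oplus M$ is concentrated in degrees $0$ and $1$: by the reduction carried out in the proof of Theorem~\ref{thm:GSB-mod} every monomial is equivalent to one of $A(X)\otimes\Bbbk X$ or of $A(X)\otimes\Bbbk Y$, so the whole part of degree $\ge 2$ vanishes, in agreement with $(M\oo\lambda M)=(M\oo\lambda C)=0$; the degree-$0$ component is $C$ and the degree-$1$ component is $M$. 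Crucially, all the defining relations listed in Theorem~\ref{thm:GSB-mod} are $\deg_Y$-homogeneous: $S$ has degree $0$, $T$ has degree $1$, and each relation $L_n^y u$ (taking $u$ homogeneous, which we may) has degree $\deg_Y(u)+1$.

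Since the defining relations are $\deg_Y$-homogeneous and Shirshov's completion preserves homogeneity --- the composition (S-polynomial) of two homogeneous elements is homogeneous, and every reduction step subtracts a homogeneous multiple --- the completion produces a $\deg_Y$-homogeneous Gr\"obner--Shirshov basis $G$ of $C\oplus M$. I would then partition $G=G_0\sqcup G_1\sqcup G_{\ge 2}$ by degree. By construction the relations of $G$ that contain exactly one letter from $Y$ are precisely the elements of $G_1$, so the corollary reduces to the assertion that $G_1$ is a Gr\"obner--Shirshov basis of the conformal $C$-module $M$. (As a sanity check, $G_0$ is supported in $A(X)\otimes\Bbbk X$ and is a Gr\"obner--Shirshov basis of $C$, while $G_{\ge 2}$ merely witnesses the vanishing of the degree-$\ge 2$ part.)

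The heart of the argument is to identify the degree-$1$ slice of the computation in $C\oplus M$ with the module computation for $M$. The $G$-irreducible monomials form a basis of $C\oplus M$, so the degree-$1$ ones form a basis of $M$, and by Theorem~\ref{thm:GSB-mod} they lie in $A(X)\otimes\Bbbk Y$. Because $\deg_Y$ is additive, every composition between two elements of $G_1$, and every composition of an element of $G_1$ with a degree-$0$ relation from $G_0$ (these govern the $A(X)$-coefficients and encode the action of the relations of $C$ on $M$), is again of degree $1$; no relation of degree $\ge 2$ can ever enter the reduction of a degree-$1$ element. Consequently the confluence of $G$ in degree $1$ is exactly the confluence of $G_1$ viewed as module relations of $M$ over $C$, and the fact that the degree-$1$ irreducible monomials form a basis of $M$ is exactly the Composition--Diamond criterion for $G_1$ to be a Gr\"obner--Shirshov basis of $M$.

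The step I expect to be the main obstacle is making this degree-$1$ identification rigorous, i.e.\ matching the two notions of composition. Concretely, I must verify that the submodule of degree-$1$ relations meets $A(X)\otimes\Bbbk Y$ in exactly the relations generated by $T$ together with the relations of $C$: the auxiliary relations $L_n^y u$ force every monomial containing an $L^y$ to vanish and, through \eqref{eq:mod_A(X)2}, rewrite each $R^x y$ into such monomials, so one must check that they impose nothing on $A(X)\otimes\Bbbk Y$ beyond $T$ and the $C$-relations, and that the single-$Y$-letter relations of $G_1$ ending in a letter of $X$ serve only to normalize degree-$1$ monomials into $A(X)\otimes\Bbbk Y$. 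Once the module compositions for $M$ over $C$ (self-overlaps of $T$-relations and overlaps of $T$-relations with the defining relations of $C$) are put in bijection with the degree-$1$ compositions inside $C\oplus M$, the corollary follows.
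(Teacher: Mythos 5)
Your proposal is correct and takes essentially the same route as the paper, which states this corollary as an immediate consequence of Theorem~\ref{thm:GSB-mod}: since every defining relation of $C\oplus M$ (the sets $S$, $T$, the structural relations, and the $L_n^y u$) is homogeneous in the number of letters from $Y$, the completion splits by that degree, with the degree-$0$ part giving a Gr\"obner--Shirshov basis of $C$ and the degree-$1$ part giving one of $M$. Your explicit $\deg_Y$-grading argument simply formalizes the splitting the paper leaves implicit.
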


In order to construct a Gr\"obner--Shirshov basis of $M$ according to Corollary \eqref{cor:GSB_mod} one should 
proceed as follows. 

Define the associative algebra $A$ generated by 
$B(X\cup Y)\setminus \{L_n^y \mid y\in Y, n\in \mathbb Z_+\}$
with respect to the defining relations 
\eqref{eq:rel_A(X)1}--\eqref{eq:rel_A(X)3}
(for $a\in X\cup Y$). 
The reason is that we do not need $L_n^y$ which is identically zero operator, but we need $R_n^y$ for $y\in Y$.
Consider the free $A$-module 
generated by $X\cup Y$ relative to the defining relations 
\eqref{eq:mod_A(X)1}--\eqref{eq:mod_A(X)2} 
(for $a\in X$, $b\in X\cup Y$).
Add the defining relations $S\cup T$ and find a Gr\"obner--Shirshov basis of the obtained set 
of relations. Finally, choose those relations that contain 
a single letter from $Y$.

\section{Poisson enveloping algebras as conformal modules}

Let us apply the technique mentioned above to find a
Gr\"obner--Shirshov basis of $H\otimes P(\mathfrak g)$ as of 
a conformal module over the associative conformal algebra $U=U(\Cur\mathfrak g, N=3)$.
As a necessary part, we need a Gr\"obner--Shirshov basis of $U$ found in \cite{KK2020}. 

Let us fix a linear basis $X$ of $\mathfrak g$, and let $Y=\{1\}$. Assume the set $X\cup Y$ is linearly ordered in such a way that $1>X$. We will denote $R_n^1$ simply by $R_n$.

For associative envelopes of Lie conformal algebras, the algebra $A$ may be slightly modified
by adding the family of commutation relations on $L_n^x$.
Namely, let $A$ be an associative algebra generated by the set 
\[
B = \{\partial, R_n, L_n^a, R_n^a \mid n\ge 0, a\in X \}
\]
relative to the following defining relations written as rewriting rules:
\begin{equation}\label{eq:U3-rulesA}
 \begin{gathered}
 \partial L_0^a \to L_0^a\partial ,
 \quad   \partial L_1^a\to L_1^a\partial -L_0^a, \\
 L_n^a\partial \to \partial L_n^a + nL_{n-1}^a,\quad n\ge 2, \\
 R_n^a\partial \to \partial R_n^a + nR_{n-1}^a,\quad n\ge 0, \\
 R_n \partial \to \partial R_n + nR_{n-1},\quad n\ge 0, \\ 
 R_m^aL_n^b \to L_n^bR_m^a,\quad n,m\ge 0, \\
 R_m L_n^b \to L_n^bR_m ,\quad n,m\ge 0, \\
 L_n^aL_m^b \to L_m^bL_n^a + L_{n+m}^{[a,b]},\quad (n,a)>_{lex}(m,b);
 \end{gathered}
\end{equation}
Consider a left $A$-module generated by the set $X\cup \{1\}$ 
with defining relations
\begin{equation}\label{eq:U3-rulesB}
 \begin{gathered}
L_n^ab\to 0,\quad R_n^ab\to 0,\quad n\ge 3, \\
L_0^a 1\to 0,\quad L_n^a  1 \to 0,\quad n\ge 2, \\
R_n^a 1\to 0, \quad R_n1\to 0,\quad n\ge 0, \\
R_2^ab \to L_2^ab,\quad R_1^ab\to L_1^{a}b , \\
R_2a \to 0, \quad R_1a\to -L_1^a1,\quad R_0a\to -\partial L_1^a1, \\
R_0^ab \to L_0^ab - [a,b];
 \end{gathered}
\end{equation}
\begin{equation}\label{eq:U3-rulesC}
 \begin{gathered}
 L_2^ab \to L_2^ba,\quad a>b, \\
 \partial L_2^ab \to L_1^ab +L_1^ba , \\
 L_1^a\partial b \to L_1^b\partial a + 3L_0^ab-3L_0^ba -2[a,b],
 \quad a>b.
 \end{gathered}
\end{equation}

In order to get a Gr\"obner--Shirshov basis of $U(\Cur\mathfrak g, N=3)$
it is enough to choose the relations without $R_m$ or $1$ and add the following 
rewriting rules \cite{KK2020}.

\begin{equation}\label{eq:GSB-Ds}
\begin{gathered}
L_1^a\partial^sb \to L_1^b\partial^sa - (s+2)L_0^b\partial^{s-1} a + 
(s+2)L_0^a\partial^{s-1}b - 2\partial^{s-1}[a, b], \\
\quad s\ge2,\ a>b,
\end{gathered}
\end{equation}
\begin{gather}
L_2^aL_2^bc \to 0, \quad a,b,c\in X_1           \label{eq:GSB-2.2} 
\\
L_1^aL_2^bc \to L_1^bL_2^ca ,\quad b\le c<a,         \label{eq:GSB-1.2}
\\
L_1^aL_2^bc \to L_1^bL_2^ac ,\quad b<a\le c,          \label{eq:GSB-1.2'} 
\end{gather}
\begin{multline}
L_1^aL_1^bc \to L_1^c L_1^a b +  L_0^b L_2^c a -  L_0^c L_2^a b + 
L_2^c [a,b] + L_2^a [c, b],\\
c<a\le b,              
                             \label{eq:GSB-1.1'}
\end{multline}
\begin{multline}
L_1^aL_1^bc \to  L_1^a L_1^c b +  L_0^b L_2^a c - L_0^c L_2^a b + 
L_2^a [c, b] \\
+ L_2^b [c,a] + L_2^c [a,b],\quad a\le c<b,    \label{eq:GSB-1.1}
\end{multline}
\begin{multline}
L_0^aL_1^bc \to L_0^a L_1^c b +  L_0^b L_1^a c + L_0^c L_1^b a -
L_0^b L_1^c a - L_0^c L_1^a b + L_1^{[c,a]} b +\\+ L_1^{[a,b]} c + 
L_1^{[b,c]} a  - L_1^c [a,b] - L_1^a [b,c] - L_1^b [c,a] ,
\quad c<b<a,         \label{eq:GSB-0.1}
\end{multline}

\begin{theorem}\label{thm:PBW}
The rules 
\eqref{eq:U3-rulesA}--\eqref{eq:GSB-0.1}
form a set  of defining relations for the split null extension
$E = U(\Cur\mathfrak g, N=3)\oplus (H\otimes P(\mathfrak g))$.
To construct a Gr\"obner--Shirshov basis, it is enough to add
the rules
\[
L_0^a\partial^s 1 \to 0,\quad a\in X, \ s>0,
\]
and
\begin{equation}\label{eq:Leibniz}
L_0^a L_1^{b_1}\dots L_1^{b_k} \partial ^s 1 \to \sum\limits_{i=1}^k L_1^{b_1}\dots L_1^{[a,b_i]} \dots L_1^{b_k} \partial ^s1
\end{equation}
for $a,b_1,\dots ,b_k\in X$,
$b_1\le \dots \le b_k$, $s\ge 0$, $k\ge 1$.
\end{theorem}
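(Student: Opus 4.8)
The plan is to run both assertions through the split-null-extension calculus of Theorem~\ref{thm:GSB-mod} and the procedure following Corollary~\ref{cor:GSB_mod}, using that a Gr\"obner--Shirshov basis $S$ of $C=U(\Cur\mathfrak g,N=3)$ is already known from \cite{KK2020}. For the first assertion I would sort the listed rules by their role. The relations \eqref{eq:U3-rulesA} present the operator algebra $A$; the rules $L_n^ab\to0$ and $R_n^ab\to0$ for $n\ge3$ are the free-module relations \eqref{eq:mod_A(X)1}--\eqref{eq:mod_A(X)2} at the locality $N(a,b)=3$; the conversions $R_m^ab\to L_m^ab$ ($m=1,2$), $R_0^ab\to L_0^ab-[a,b]$, the rules \eqref{eq:U3-rulesC}, and \eqref{eq:GSB-Ds}--\eqref{eq:GSB-0.1} make up the basis $S$ of $C$; and the relations containing the generator $1$ are the module relations $T$. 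The latter encode the action $(a\oo\lambda1)=\lambda\,L_1^a1$, yielding $L_0^a1\to0$ and $L_n^a1\to0$ ($n\ge2$), together with $R_2a\to0$, $R_1a\to-L_1^a1$, $R_0a\to-\partial L_1^a1$ read off from $(a\oo{-\lambda-\partial}1)=(-\lambda-\partial)L_1^a1$, and $R_n^a1\to0$, $R_n1\to0$ coming from the vanishing of every $\lambda$-product with $1$ on the left. With $S\cup T$ identified, Theorem~\ref{thm:GSB-mod} gives the first assertion at once, the absence of the operators $L_n^1$ from $A$ absorbing the relations $L_n^1u$.

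For the second assertion I would apply the Diamond Lemma to the free $A$-module on $X\cup\{1\}$. Since $S$ is already confluent, every composition internal to $C$ resolves, so by Corollary~\ref{cor:GSB_mod} it remains to close the compositions that touch the generator $1$. I expect the two added families to be exactly these completions: $L_0^a\partial^s1\to0$ ($s>0$) arises as the composition of $\partial L_0^a\to L_0^a\partial$ with $L_0^a1\to0$ on the word $\partial^sL_0^a1$, and \eqref{eq:Leibniz} arises by driving $L_0^a$ rightward through $L_1^{b_1}\cdots L_1^{b_k}$ via the commutator rule $L_1^bL_0^a\to L_0^aL_1^b+L_1^{[b,a]}$ and evaluating on $1$. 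The structural step is then to show that, once these are adjoined, the irreducible words ending in $1$ are precisely $L_1^{b_1}\cdots L_1^{b_k}\partial^s1$ with $b_1\le\dots\le b_k$: the high modes die through $L_n^a1\to0$ ($n\ge2$), the $R$-operators convert to $L$-operators, \eqref{eq:Leibniz} together with $L_0^a\partial^s1\to0$ removes every $L_0^a$, and the commutator rule $L_n^aL_m^b\to L_m^bL_n^a+L_{n+m}^{[a,b]}$ orders the remaining $L_1$'s, the correction modes $L_{\ge2}$ migrating to $1$ and vanishing there.

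I expect the main obstacle to be confluence of the overlaps between \eqref{eq:Leibniz} and the commutator rule $L_n^aL_m^b\to L_m^bL_n^a+L_{n+m}^{[a,b]}$ sharing the letter $L_0$, evaluated on a module word: the case $n=m=0$ of two brackets reduces to the Jacobi identity $\{a,\{c,u\}\}-\{c,\{a,u\}\}=\{[a,c],u\}$ of $\mathfrak g$, while one $L_0$ against one $L_1$, together with the reordering of the letters $L_1^{[a,b_i]}$ thrown up by \eqref{eq:Leibniz}, reduces to the Leibniz rule and the commutativity of the product in $P(\mathfrak g)$. Rather than expand every $S$-polynomial, I would close the argument by comparison with the known basis of $H\otimes P(\mathfrak g)=\Bbbk[\partial]\otimes S(\mathfrak g)$. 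All listed rules are sound in $H\otimes P(\mathfrak g)$; a monomial order counting first the number of $R$-operators, then the length, then breaking ties lexicographically with $1>X$, is decreased by every rule, so the system terminates; and filtering by $\partial$-degree one checks $L_1^{b_1}\cdots L_1^{b_k}\partial^s1=\partial^s\otimes(b_1\cdots b_k)+(\text{lower }\partial\text{-degree})$, so the irreducible words have distinct leading terms $\partial^s\otimes(b_1\cdots b_k)$ and are linearly independent. Being simultaneously spanning and independent, they form a basis, whence by the Diamond Lemma the system is confluent; the augmented rules are thus a Gr\"obner--Shirshov basis of $E$, and by Corollary~\ref{cor:GSB_mod} their single-$1$ part is one for $H\otimes P(\mathfrak g)$.
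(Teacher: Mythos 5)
Your proposal is correct in its essentials, but it reaches the conclusion by a genuinely different route than the paper, so a comparison is in order. Where you agree: your identification of the roles of the rules, and your derivation of the added rules as compositions, match the paper --- indeed the paper's induction step for \eqref{eq:Leibniz} is exactly your ``drive $L_0^a$ through the $L_1$'s'' overlap with $L_1^{b}L_0^a \to L_0^aL_1^{b}+L_1^{[b,a]}$ (the paper's base case $k=1$ comes instead from an overlap involving $R_0\partial\to\partial R_0$ and \eqref{eq:U3-rulesC}; your overlap of the commutator rule with $L_0^a1\to0$ gives the same rule more directly). Where you diverge: the paper proves confluence by brute force, multiplying each of \eqref{eq:U3-rulesC}--\eqref{eq:GSB-0.1} by $R_0$ and $R_1$ and checking page after page that every remaining composition is trivial. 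You replace all of that by a triangularity argument against the classical PBW basis: irreducible words ending in $1$ are exactly $L_1^{b_1}\cdots L_1^{b_k}\partial^s1$ with $b_1\le\dots\le b_k$, they map to $\partial^s\otimes(b_1\cdots b_k)+(\text{lower }\partial\text{-degree})$ in $H\otimes P(\mathfrak g)\cong \Bbbk[\partial]\otimes S(\mathfrak g)$, hence are independent; spanning plus independence forces confluence by the Diamond Lemma, and simultaneously proves that the original rules are defining relations. This is legitimate and much shorter; its cost is that it leans on knowing the answer in advance (PBW for $P(\mathfrak g)$, and the basis of $U(\Cur\mathfrak g,N=3)$ from \cite{KK2020} for the words ending in $X$), whereas the paper's explicit checks are self-contained and exhibit which compositions are nontrivial --- information reused in the final Composition--Diamond corollary for Poisson algebras.

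Three points in your write-up need repair, though none is fatal. First, the opening claim that ``Theorem~\ref{thm:GSB-mod} gives the first assertion at once'' is circular: that theorem presupposes that $T$ is a set of \emph{defining} relations of $M$ as a conformal $C$-module, and this is precisely the nontrivial (PBW-type) content of the statement being proved. In your own argument the first assertion is actually established only at the end, by the surjection-plus-counting step, so the appeal to Theorem~\ref{thm:GSB-mod} must be demoted to identifying the shape of the presentation. Second, the claim that the correction modes $L_{\ge2}$ ``migrate to $1$ and vanish there'' is false once $\partial$'s intervene: $L_2^{[a,b]}\partial 1\to\partial L_2^{[a,b]}1+2L_1^{[a,b]}1\to 2L_1^{[a,b]}1$, so reordering the $L_1$'s leaves $L_1$-residues; both your spanning argument and the triangularity computation must absorb these by induction on the monomial order (they can, since the residues are shorter words of the required form). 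Third, your monomial order is underspecified: ``$R$-count, then length, then lex with $1>X$'' does not orient the $\partial$-commutation rules, which move $\partial$ in opposite directions for $n\le 1$ and $n\ge2$; one needs an order on the operator letters with $R$'s largest and $L_n^a>\partial>L_1^b>L_0^c$ for $n\ge2$, as implicitly used in \cite{KK2020}, so that $\partial L_0^a\to L_0^a\partial$, $\partial L_1^a\to L_1^a\partial-L_0^a$ and $L_n^a\partial\to\partial L_n^a+nL_{n-1}^a$ all decrease.
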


Hence, the specific defining relations of $H\otimes P(\mathfrak g)$
as of a conformal module over $U(\Cur \mathfrak g, N=3)$
are 
\[
L_n^a1\to 0,\quad n=0,2,3,\dots .
\]
\begin{proof}
Let us start with the intersection of 
the first relation in \eqref{eq:U3-rulesC}
with $R_0\partial \to \partial R_0 $.
On the one hand,
\begin{multline*}
    R_0\partial L_2^a b \to \partial R_0L_2^ab 
\to \partial L_2^aR_0b  
\to -\partial L_2^a\partial L_1^b 1 
\to -\partial L_2^aL_1^b \partial 1 + \partial L_2^aL_0^b 1
\\
\to -\partial L_1^bL_2^a\partial 1 -\partial L_3^{[a,b]}\partial 1 + \partial L_0^bL_2^a1 +\partial L_3^{[a,b]}1 \to -2\partial L_1^bL_1^a 1 
\\
\to -2L_1^bL_1^a\partial 1 + 2L_0^bL_1^a 1 + 2L_1^bL_0^a 1
\to -2L_1^bL_1^a\partial 1 + 2L_0^bL_1^a 1.
\end{multline*}
On the other hand, 
\begin{multline*}
R_0\partial L_1^a b + R_0L_1^ba 
\to -L_1^a\partial L_1^b1 - L_1^b\partial L_1^a 1 
\to -L_1^a L_1^b \partial 1 - L_1^b L_1^a \partial \\ 1
+ L_1^aL_0^b1 + L_1^bL_0^a1
\to -2L_1^bL_1^a\partial 1 - L_2^{[a,b]} \partial 1
\to -2L_1^bL_1^a\partial 1 - 2L_1^{[a,b]}1
\end{multline*}
Therefore, the composition is 
\[
L_0^bL_1^a 1 + L_1^{[a,b]} 1
\]
which is equivalent to the rewriting rule 
\[
L_0^bL_1^a 1 \to L_1^{[b,a]} 1,\quad a,b\in X.
\]
The latter is exactly \eqref{eq:Leibniz}
for $k=1$.

Proceed by induction on $k$. Assume \eqref{eq:Leibniz}
holds for some $k$ (with $s=0$), then the composition of intersection with $L_1^{b_{k+1}}L_0^a \to L_0^aL_1^{b_{k+1}} + L_1^{[b_{k+1},a]}$
leads to the same sort rule for $k+1$.

The compositions of the rule \eqref{eq:Leibniz} with 
$\partial L_0^a\to L_0^a\partial $
gives rise to the desired relations for $s\ge 1$.

It is straightforward to check that the remaining compositions are all trivial.
For example, let us consider the intersection of \eqref{eq:GSB-0.1}
with $R_1L_0^a\to L_0^aR_1$. On the one hand, 
\[
R_1L_0^aL_1^bc \to L_0^aL_1^bR_1c \to -L_0^aL_1^bL_1^c1 
\to -L_1^{[a,b]}L_1^c 1 - L_1^bL_1^{[a,c]} 1.
\]
On the other hand, 
\begin{multline*}
R_1L_0^aL_1^bc \to 
L_0^a L_1^c R_1b +  L_0^b L_1^a R_1c + L_0^c L_1^b R_1a -
L_0^b L_1^c R_1 a - L_0^c L_1^a R_1b \\
+ L_1^{[c,a]} R_1b 
+ L_1^{[a,b]} R_1 c + 
L_1^{[b,c]} R_1a  - L_1^c R_1[a,b] - L_1^a R_1[b,c] - L_1^b R_1[c,a] \\
\to 
-L_0^a L_1^c L_1^b 1 -  L_0^b L_1^a L_1^c1 - L_0^c L_1^b L_1^a1
+L_0^b L_1^c L_1^a 1 + L_0^c L_1^a L_1^b 1 - L_1^{[c,a]} L_1^b 1 \\
- L_1^{[a,b]} L_1^c 1 -L_1^{[b,c]} L_1^a 1  + L_1^c L_1^{[a,b]} 1 
+ L_1^a L_1^{[b,c]} 1 + L_1^b L_1^{[c,a]} 1
 \\
=
\big( -L_0^a L_1^c L_1^b 1 + L_1^{[a,c]} L_1^b 1 + L_1^c L_1^{[a,b]} 1 \big)
+\big (-L_0^b L_1^a L_1^c1 + L_1^{[b,a]} L_1^c 1 \\
+ L_1^a L_1^{[b,c]} 1 \big)
+ \big( L_0^c L_1^a L_1^b 1 -L_0^c L_1^b L_1^a1 \big)
+ \big (L_0^b L_1^c L_1^a 1 -L_1^{[b,c]} L_1^a 1  \big)
+ L_1^b L_1^{[c,a]} 1 \\
\to 
L_0^c L_2^{[a,b]} 1 + L_1^c L_1^{[b,a]} 1 + L_1^bL_1^{[c,a]} 1.
\end{multline*}
The last to expressions are equal modulo the relations 
$L_2^x1\to 0$, $x\in \mathfrak g$.

\medskip

Note that: 

\[ L_2^aL_1^b 1 \to L_1^bL_2^a 1 + L_3^{[a,b]}1 \to 0\]

\[ L_n^a \partial 1 \to \partial L_n^a 1 + n L_{n-1}^a 1 \to 0, \qquad n\ge 3\]

We will use these last two relations without explanations.

Let us check \eqref{eq:U3-rulesC}. First relation, multiplied by $R_0$, $a>b$:

\begin{multline*} R_0L_2^a b \to - L_2^a \partial L_1^b 1 \to - L_2^aL_1^b \partial 1 + L_2^aL_0^b1\to -L_1^bL_2^a\partial 1 - L_3^{[a,b]}\partial 1 \\ \to -L_1^bL_2^a\partial 1 - \partial L_3^{[a,b]}1 - 3L_2^{[a,b]}1 \to -L_1^bL_2^a\partial 1 \to -L_1^b\partial L_2^a 1 - 2 L_1^bL_1^a 1 \\\to -2 L_1^bL_1^a 1\end{multline*}

\[ R_0 L_2^b a \to \dots - 2L_1^aL_1^b 1 \to -2 L_1^bL_1^a 1 - L_2^{[a,b]}1 \to -2L_1^bL_1^a 1\]

First relation, multiplied by $R_1$, $a>b$:

\[  R_1L_2^ab\to -L_2^aR_1^b\to -L_2aL_1^b1\to - L_1^bL_2^a1+L_3^{[a,b]}1\to 0 \]

\[  R_1L_2^ba\to\dots\to 0 \]

Second relation, multiplied by $R_0$:

\begin{multline*} R_0\partial L_2^a b \to - \partial L_2^a \partial L_1^b 1 \to -\partial L_2^a L_1^b \partial 1 \to -\partial L_1^b L_2^a \partial 1 \\\to -\partial L_1^b \partial L_2^a 1 - 2 \partial L_1^b L_1^a 1 \\\to -2L_1^b\partial L_1^a 1 + 2 L_0^b L_1^a 1 \to -2 L_1^b L_1^a \partial 1 + 2 L_1^{[b,a]}1  \end{multline*}

\begin{multline*}
    R_0L_1^a b + R_0 L_1^b a \to - L_1^a L_1^b \partial 1 - L_1^bL_1^a \partial 1 = -2L_1^bL_1^a\partial 1 - L_2^{[a,b]}\partial 1 \\\to -2L_1^bL_1^a\partial 1 - 2 L_1^{[a,b]}1 = -2L_1^bL_1^a\partial 1 + 2 L_1^{[b,a]}1
\end{multline*}

Second relation, multiplied by $R_1$:

\begin{multline*}
    R_1\partial L_2^a b \to -\partial L_2^a L_1^b 1 - L_2^a L_1^b \partial 1  \to -\partial L_1^b L_2^a 1 - L_1^b L_2^a \partial 1 \to - 2L_1^b L_1^a 1 
\end{multline*}

\begin{multline*}
    R_1L_1^a b + R_1 L_1^b a \to - L_1^a L_1^b 1 - L_1^b L_1^a 1 \to -2L_1^bL_1^a 1 - L_2^{[a,b]}1 \to -2 L_1^bL_1^a 1 
\end{multline*}

Third relation, multiplied by $R_0$, $a>b$:

\begin{multline*}
    R_0L_1^a\partial b \to L_1^a\partial R_0b\to -L_1^a\partial^2L_1^b1\to -L_1^a\partial L_1^b\partial 1 + L_1^a\partial L_0^b 1 \\\to -L_1^a L_1^b \partial^2 1 +L_1^aL_0^b \partial 1 \to -L_1^a L_1^b \partial^2 1 \to -L_1^bL_1^a\partial^2 1 - L_2^{[a,b]}\partial^2 1 \\ \to -L_1^bL_1^a\partial^2 1 - \partial L_2^{[a,b]}\partial 1 - 2L_1^{[a,b]}\partial 1 \\\to -L_1^bL_1^a\partial^2 1 - \partial^2 L_2^{[a,b]}1 - 2\partial L_1^{[a,b]}1 - 2L_1^{[a,b]}\partial 1\to -L_1^bL_1^a\partial^2 1 -4L_1^{[a,b]}\partial 1 
\end{multline*}

\begin{multline*}
    R_0L_1^b\partial a + 3R_0L_0^ab-3R_0L_0^ba-2R_0[a,b] \\ \to - L_1^b\partial^2 L_1^a1 -3L_0^a\partial L_1^b1 + 3L_0^b\partial L_1^a1 + 2 \partial L_1^{[a,b]}1 \\\to -L_1^b\partial L_1^a\partial 1 + L_1^b\partial L_0^a 1 - 3 L_0^aL_1^b\partial 1 + 3 L_0^bL_1^a\partial 1 + 2L_1^{[a,b]}\partial 1 \\ \to -L_1^bL_1^a \partial^2 1 + L_1^bL_0^a\partial 1 -3 L_0^aL_1^b\partial 1 + 3 L_0^bL_1^a\partial 1 + 2L_1^{[a,b]}\partial 1 \\ \to -L_1^bL_1^a \partial^2 1 -3 L_0^aL_1^b\partial 1 + 3 L_0^bL_1^a\partial 1 + 2L_1^{[a,b]}\partial 1 \\\to  -L_1^bL_1^a \partial^2 1 - 3 L_1^{[a,b]}\partial 1 + 3L_1^{[b,a]}\partial 1+2L_1^{[a,b]}\partial 1 \to -L_1^bL_1^a\partial^2 1 -4L_1^{[a,b]}\partial 1 
\end{multline*}

Third relation, multiplied by $R_1$, $a>b$:

\begin{multline*}
    R_1L_1^a\partial b \to - L_1^a\partial L_1^b 1 - L_1^a\partial L_1^b 1 \to -2L_1^aL_1^b\partial 1 \to -2L_1^bL_1^a \partial 1 - 2L_2^{[a,b]}\partial 1 \\\to -2L_1^bL_1^a \partial 1 - 4L_1^{[a,b]}1 
\end{multline*}

\begin{multline*}
    R_1L_1^b\partial a + 3R_1L_0^ab-3R_1L_0^ba-2R_1[a,b] \\ \to -L_1^b\partial L_1^a 1 - L_1^b \partial L_1^a 1 - 3L_0^aL_1^b 1 + 3 L_0^b L_1^a 1 + 2 L_1^{[a,b]}1 \\\to -2L_1^bL_1^a\partial 1 -3 L_1^{[a,b]}1 + 3 L_1^{[b,a]}1 + 2 L_1^{[a,b]}1 \to -2L_1^bL_1^a\partial 1 -4L_1^{[a,b]}1
    \end{multline*}

Let us check \eqref{eq:GSB-Ds}, $s\ge 2$, $a>b$. Left part of relation, multiplied by $R_0$:

\begin{multline*} R_0L_1^a\partial^s b \to -L_1^a\partial^{s+1}L_1^b 1 \to -L_1^a \partial^s L_1^b \partial 1 \to - L_1^a L_1^b \partial^{s+1} 1 - sL_1^a L_0^b \partial^s 1  \\\to -L_1^bL_1^a\partial^{s+1}1 -L_2^{[a,b]}\partial^{s+1} 1 - sL_0^bL_1^a \partial^s 1 - sL_1^{[a,b]}\partial^s 1 \\\to
-L_1^bL_1^a\partial^{s+1}1-L_2^{[a,b]}\partial^{s+1}1-sL_1^{[b,a]}\partial^s 1 - s L_1^{[a,b]}\partial^s 1 \\\to -L_1^bL_1^a\partial^{s+1}1-L_2^{[a,b]}\partial^{s+1}1\to -L_1^bL_1^a\partial^{s+1}1-2(s+1)L_1^{[a,b]}\partial^{s+1}1
\end{multline*}

Right part of relation (one term at a time), multiplied by $R_0$:
\begin{multline*}
R_0L_1^b\partial^s a \to - L_1^b\partial^{s+1} L_1^a 1 \to -L_1^bL_1^a\partial^{s+1}1 + (s+1)L_1^bL_0^a\partial^{s} 1 \\\to -L_1^bL_1^a\partial^{s+1}1 
\end{multline*}

\begin{multline*}
    -R_0(s+2)L_0^b\partial^{s-1}a \to (s+2)L_0^b\partial^{s}L_1^a 1 \\\to (s+2)L_0^bL_1^a \partial^s 1 - s(s+2)L_0^bL_0^a \partial^{s-1} 1\to (s+2) L_1^{[b,a]}\partial^{s}1
\end{multline*}

\[
R_0(s+2)L_0^a\partial^{s-1}b \to -(s+2)L_1^{[a,b]}\partial^s 1
\]

\begin{multline*}
  -2R_0\partial^{s-1}[a,b]\to 2\partial^{s-1}L_1^{[a,b]}1 \to 2L_1^{[a,b]}\partial^{s-1}1 - 2(s-1)L_0^{[a,b]}\partial^{s-2}1 \\\to 2L_1^{[a,b]}\partial^{s-1}1   
\end{multline*}

So, right part goes to 
\begin{multline*} -L_1^bL_1^a\partial^{s+1}1 + (s+2) L_1^{[b,a]}\partial^{s}1 -(s+2)L_1^{[a,b]}\partial^s 1 + 2L_1^{[a,b]}\partial^{s-1}1 \\\to -L_1^bL_1^a\partial^{s+1}1 + 2(s+1) L_1^{[b,a]}\partial^{s}1 = -L_1^bL_1^a\partial^{s+1}1 - 2(s+1) L_1^{[a,b]}\partial^{s}1
\end{multline*}

Left part of relation \eqref{eq:GSB-Ds}, multiplied by $R_1$.

\begin{multline*}
    R_1L_1^a\partial^s b \to -L_1^a\partial^s L_1^b 1 - sL_1^a\partial^{s}L_1^b 1 \to -(s+1)L_1^aL_1^b\partial^s 1 - s L_1^aL_0^b\partial^{s-1}1 \\\to -(s+1)L_1^bL_1^a\partial^s 1 - (s+1)L_2^{[a,b]}\partial^s 1 \to -(s+1)L_1^bL_1^a\partial^s 1 - 2(s+1)s L_1^{[a,b]}\partial^{s-1}1
\end{multline*} 

Right part of relation \eqref{eq:GSB-Ds} (one term at a time), multiplied by $R_0$

\[
    R_1L_1^b\partial^s \to -(s+1)L_1^bL_1^a\partial^s 1 
\]

\begin{multline*}
-R_1(s+2)L_0^b\partial^{s-1}a \to (s+2)L_0^b\partial^{s-1}L_1^a 1 + (s-1)(s+2)L_0^b\partial^{s-1}L_1^a 1 \\\to s(s+2)L_1^{[b,a]}\partial^{s-1}1
\end{multline*}

\[
R_1(s+2)L_0^a\partial^{s-1}b \to -s(s+2)L_1^{[a,b]}\partial^{s-1}1
\]

\[
    -2R_1\partial^{s-1}[a,b]\to 2\partial^{s-1}L_1^{[a,b]}1 + 2(s-1)\partial^{s-1}L_1^{[a,b]} 1 \to 2sL_1^{[a,b]}\partial^{s-1} 1
\]

So, right part goes to 

\[
 -(s+1)L_1^bL_1^a\partial^s 1 -2s(s+1)L_1^{[a,b]}\partial^{s-1}1    
\]

Let us check \eqref{eq:GSB-2.2}, $b\le c < a$. 

\begin{multline*}R_0L_2^aL_2^b c \to - L_2^aL_2^b \partial L_1^c 1 \to -L_2^aL_1^cL_2^b \partial 1 \to -L_1^cL_2^aL_2^b \partial 1 \\\to -2L_1^cL_2^a L_1^b 1 \to 0\end{multline*}

\[R_1L_2^aL_2^b c \to -L_2^aL_2^bL_1^c 1  \to 0 \] 

Let us check \eqref{eq:GSB-1.2}, $b < a \le c$.

\[R_0L_1^aL_2^b c \to -L_1^aL_2^b\partial L_1^c 1 \to -L_1^a\partial L_2^bL_1^c 1 - 2 L_1^aL_1^bL_1^c 1 \to -2L_1^bL_1^cL_1^a 1  \]

\[ R_0L_1^bL_2^c a \to -2 L_1^bL_1^cL_1^a 1\]

\[ R_1L_1^aL_2^b c \to -L_1^aL_2^bL_1^c 1 \to 0\]

\[ R_1L_1^bL_2^c a \to 0\]

 \eqref{eq:GSB-1.2'} is checked similarly.
 
 Let us check \eqref{eq:GSB-1.1'}, $c< a \le b$.  Left part of relation, multiplied by $R_0$
 
\begin{multline*} R_0L_1^aL_1^b c \to - L_1^aL_1^b\partial L_1^c 1 \to -L_1^aL_1^bL_1^c \partial 1 \to -L_1^a L_1^c L_1^b \partial 1 - L_1^aL_2^{[b,c]}\partial 1 \\\to -L_1^cL_1^aL_1^b \partial 1 - L_2^{[a,c]}L_1^b \partial 1 - 2L_1^aL_1^{[b,c]} 1  \\\to -L_1^cL_1^aL_1^b \partial 1 - L_1^bL_2^{[a,c]}\partial 1 -2 L_1^aL_1^{[b,c]} 1 \\\to  -L_1^cL_1^aL_1^b \partial 1 - 2L_1^bL_1^{[a,c]}1 -2L_1^aL_1^{[b,c]}1 \end{multline*}

Right part of relation, multiplied by $R_0$ (one term at a time)

\[
    R_0L_1^cL_1^a b \to -L_1^cL_1^aL_1^b \partial 1
\]

\begin{multline*}
    R_0L_0^bL_2^c a \to -L_0^bL_2^c\partial L_1^a 1 \to -L_0^bL_2^cL_1^a \partial 1 \to -L_0^bL_1^aL_2^c \partial 1 \\\to -2L_0^bL_1^aL_1^c 1 \to -2 L_1^{[b,c]}L_1^a 1 -2 L_1^cL_1^{[b,a]}1
\end{multline*}

\[ -R_0L_0^cL_2^a b \to 2L_0^cL_1^aL_1^b 1 \to 2 L_1^{[c,a]}L_1^b 1 + 2 L_1^a L_1^{[c,b]}1 \]

\[ R_0L_2^c[a,b]\to -L_2^c\partial L_1^{[a,b]}1 \to -L_1^{[a,b]}L_2^c\partial 1 \to -2L_1^{[a,b]}L_1^c 1\]

\[ R_0L_2^a[c,b] \to -2 L_1^{[c,b]}L_1^a 1 \]

So, right part goes to 

\[ -L_1^cL_1^aL_1^b\partial 1 +  2 L_1^{[c,a]}L_1^b 1 + 2 L_1^a L_1^{[c,b]}1 \]

Now, \eqref{eq:GSB-1.1'}, multiplied by $R_1$. Left part:

\[
    R_1L_1^aL_1^b c \to -L_1^aL_1^bL_1^c 1 \to -L_1^aL_1^cL_1^b1 \to -L_1^cL_1^aL_1^b 1
\]

Right part:

\[ R_1L_1^cL_1^a b \to -L_1^cL_1^aL_1^b 1\]

\[ R_1L_0^bL_2^c a \to - L_0^bL_2^cL_1^a 1 \to 0  \]

\[ -R_1L_0^cL_2^a b \to 0\]

\[ R_1L_2^c[a,b] \to - L_2^cL_1^{[a,b]} 1 \to 0\]

\[ R_1L_2^a[c,b]\to 0\]

So, right part goes to \[-L_1^cL_1^aL_1^b 1\]

Let us check \eqref{eq:GSB-1.1}, $a\le c < b$ . Left part, multiplied by $R_0$:

\begin{multline*} R_0L_1^aL_1^b c \to - L_1^aL_1^b\partial L_1^c 1 \to -L_1^aL_1^bL_1^c \partial 1 \to -L_1^a L_1^c L_1^b \partial 1 - L_1^aL_2^{[b,c]}\partial 1 \\\to -L_1^aL_1^cL_1^b \partial 1 - 2L_1^aL_1^{[b,c]} 1  \end{multline*}

Right part, multiplied by $R_0$:

\[ R_0L_1^aL_1^c b \to -L_1^aL_1^cL_1^b \partial 1\]

\begin{multline*} R_0L_0^bL_2^a c \to -L_0^bL_2^aL_1^c \partial 1 \to -L_0^bL_1^cL_2^a \partial 1 \\\to -2L_0^bL_1^aL_1^c 1 \to -2L_1^{[b,a]}L_1^c1 -2L_1^aL_1^{[b,c]}1 \end{multline*}

\[ -R_0 L_0^cL_2^a b \to -2L_0^cL_1^aL_1^b 1 \to 2L_1^{[c,a]}L_1^b 1 + 2L_1^aL_1^{[c,b]}1 \]

\[ R_0L_2^x[y,z]\to - L_2^x L_1^{[y,z]} \partial 1 \to -2L_1^{[y,z]}L_1^x 1 \]

In last relation $x,y,z\in \{a,b,c\}$. So, right part goes to 

\[ -L_1^aL_1^cL_1^b \partial 1  -2L_1^aL_1^{[b,c]}1    \]

Proof for \eqref{eq:GSB-1.1}, multiplied by $R_1$, is pretty similar with \eqref{eq:GSB-1.1'}, since:
\[ R_1L_2^x[y,z]\to -L_2^xL_1^{[y,z]}1 \to 0 \]

Let us check \eqref{eq:GSB-0.1}, $c<b<a$. Left part of the relation, multiplied by $R_0$:

\begin{multline*} R_0L_0^aL_1^b c \to -L_0^aL_1^b L_1^c \partial 1 \to -L_0^aL_1^cL_1^b \partial 1 - 2L_0^aL_1^{[b,c]} 1 \\\to -L_1^{[a,c]}L_1^b-L_1^cL_1^{[a,b]}\partial 1 - 2L_1^{[a,[b,c]]}1 \end{multline*}

Right part of the relation, multiplied by $R_0$:

\[ R_0L_0^aL_1^c b \to  -L_1^{[a,c]}L_1^b 1 - L_1^cL_1^{[a,b]}1 \]

\[ R_0L_0^bL_1^a c -R_0L_0^bL_1^ca \to L_0^bL_1^cL_1^a\partial 1 + 2L_0^bL_1^{[a,c]} 1 - L_0^bL_1^cL_1^a\partial 1 \to 2L_1^{[b,[a,c]}1    \]

\[ R_0L_0^cL_1^b a -R_0L_0^cL_1^ab \to -L_0^cL_1^bL_1^a1 \partial 1 + L_0^cL_1^b L_1^a1 \partial 1 + 2L_0^cL_1^{[a,b]}1 \to 2L_1^{[c,[a,b]]}1 \]

\[ R_0L_1^{[c,a]}b+R_0L_1^{[a,b]}c+R_0L_1^{[b,c]}a \to -L_1^{[c,a]}L_1^{b}\partial 1 -L_1^{[a,b]}L_1^c \partial 1 - L_1^{[b,c]}L_1^a \partial 1  \]

\[ -R_0L_1^c[a,b]-R_0L_1^a[b,c]-R_0L_1^b[c,a]\to L_1^cL_1^{[a,b]}\partial 1+L_1^aL_1^{[b,c]}\partial 1+L_1^bL_1^{[c,a]}\partial 1\]

So, the composition is 0 by Jacobi identity.

Left part of the relation, multiplied by $R_1$:

\[ R_1L_0^aL_1^b c \to -L_0^aL_1^bL_1^c 1 \to -L_0^aL_1^cL_1^b 1 \to -L_1^{[a,c]}L_1^b 1 - L_1^cL_1^{[a,b]}1  \]

Right part of the relation, multiplied by $R_1$:

\[ R_1L_0^aL_1^c b \to  -L_1^{[a,c]}L_1^b 1 - L_1^cL_1^{[a,b]}1 \]

\[ R_1L_0^bL_1^a c -R_1L_0^bL_1^ca \to  -L_0^bL_1^aL_1^c 1 + L_0^bL_1^cL_1^a 1  \to 0 \]

\[ R_1L_0^cL_1^b a -R_1L_0^cL_1^ab \to -L_0^cL_1^bL_1^a 1 + L_0^cL_1^aL_1^b 1 \to 0\]

\[ R_1L_1^{[c,a]}b+R_1L_1^{[a,b]}c+R_1L_1^{[b,c]}a \to -L_1^{[c,a]}L_1^{b}1 -L_1^{[a,b]}L_1^c 1 - L_1^{[b,c]}L_1^a 1  \]

\[ -R_1L_1^c[a,b]-R_1L_1^a[b,c]-R_1L_1^b[c,a]\to L_1^cL_1^{[a,b]}+L_1^aL_1^{[b,c]}1+L_1^bL_1^{[c,a]}1\]

So, the composition is zero. This completes the proof. 
\end{proof}

\begin{corollary}
The linear basis of $H\otimes P(\mathfrak g)$ 
as of $H$-module 
consists of the words $L_1^{b_1}\dots L_k^{b_k}1$, 
$b_1\le \dots \le b_k$.
\end{corollary}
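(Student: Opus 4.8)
The corollary follows from Theorem~\ref{thm:PBW} by the standard principle that a confluent rewriting system singles out a linear basis of normal (irreducible) monomials. Since Theorem~\ref{thm:PBW} exhibits a Gr\"obner--Shirshov basis for the split null extension $E=U(\Cur\mathfrak g,N=3)\oplus(H\otimes P(\mathfrak g))$, the words irreducible under \eqref{eq:U3-rulesA}--\eqref{eq:Leibniz} form a $\Bbbk$-basis of~$E$, and by Corollary~\ref{cor:GSB_mod} those ending in the single letter~$1$ form a $\Bbbk$-basis of $H\otimes P(\mathfrak g)$. The plan is to read off these words and recognise them as the asserted family.

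To identify them I would work through the leading monomials of the rules. Every operator $R_n^a$ or $R_n$ commutes to the right past the $L$'s via \eqref{eq:U3-rulesA} and is killed by $R_n^a1\to0$, $R_n1\to0$, so an irreducible word involves only $\partial$'s and $L$'s. No factor $L_n^a$ with $n\ge2$ survives: commuting it rightward via $L_n^aL_m^b\to L_m^bL_n^a+L_{n+m}^{[a,b]}$ leaves only summands annihilated on~$1$, since $L_n^aL_1^{c_1}\cdots L_1^{c_m}1\to0$ whenever $n\ge2$. Every $L_0^a$ is carried to the left end by $L_1^bL_0^a\to L_0^aL_1^b+L_1^{[b,a]}$ and there removed by $L_0^a\partial^s1\to0$ or by the Leibniz rule \eqref{eq:Leibniz}. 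Finally the directional rules $\partial L_0^a\to L_0^a\partial$ and $\partial L_1^a\to L_1^a\partial-L_0^a$ push every $\partial$ adjacent to~$1$, while any inversion $L_1^aL_1^b$ with $a>b$ is reordered by $L_1^aL_1^b\to L_1^bL_1^a+L_2^{[a,b]}$, whose correction again dies on~$1$. Hence the irreducible module words are exactly $L_1^{b_1}\cdots L_1^{b_k}\partial^s1$ with $b_1\le\cdots\le b_k$ and $s\ge0$.

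To pass from this $\Bbbk$-basis to an $H$-basis I would absorb the powers of~$\partial$. Iterating $\partial L_1^a\to L_1^a\partial-L_0^a$ together with \eqref{eq:Leibniz} yields
\[
\partial\cdot\bigl(L_1^{b_1}\cdots L_1^{b_k}1\bigr)=L_1^{b_1}\cdots L_1^{b_k}\,\partial 1+(\text{terms with fewer than }k\text{ factors }L_1),
\]
a relation triangular in the number~$k$ of $L_1$-factors. This triangularity shows that the $H$-span of $\{L_1^{b_1}\cdots L_1^{b_k}1:b_1\le\cdots\le b_k\}$ exhausts the $\Bbbk$-span of all $\partial^sL_1^{b_1}\cdots L_1^{b_k}1$ and that these generators are $H$-independent, which is the claimed $H$-basis.

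The main obstacle is the identification step: verifying that every word outside the asserted family really contains a reducible leading monomial. The delicate point is that the correction terms $L_n^{\ast}$ with $n\ge2$ produced while reordering the $L_1$'s must cascade to zero on~$1$, which rests on $L_n^aL_1^{c_1}\cdots L_1^{c_m}1\to0$ for $n\ge2$; this is obtained by commuting the offending factor rightward and applying $L_n^{\ast}1\to0$ at each stage, and one should confirm it does not disturb the confluence already secured in Theorem~\ref{thm:PBW}.
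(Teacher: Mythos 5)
Your proof is correct and takes essentially the same route as the paper, whose entire argument is the one-line observation that the reduced words ending in $1$ (for the Gr\"obner--Shirshov basis of Theorem~\ref{thm:PBW}) contain no $L_0^a$ and no $L_n^a$ with $n\ge 2$, hence are exactly the sorted $L_1$-words. Your extra steps --- eliminating the $R$'s, locating the $\partial$'s next to $1$, and the triangularity argument passing from the $\Bbbk$-basis $\{L_1^{b_1}\cdots L_1^{b_k}\partial^s 1\}$ to the claimed $H$-basis --- simply make explicit what the paper leaves implicit (and your worry about confluence is unfounded, since $L_n^aL_1^{c_1}\cdots L_1^{c_m}1\to 0$ for $n\ge 2$ is a derived reduction, not a new rule).
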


Indeed, the reduced words ending with $1$ 
do not contain $L_0^a$ or $L_n^a$ for $n\ge 2$. 
The result agrees with the classical Poincar\'e--Birkhoff--Witt Theorem for $P(\mathfrak g)
=\mathrm{gr}\,U(\mathfrak g)$. 

Being applied to the case when $\mathfrak g$ is a free Lie algebra 
$\Lie (G)$
generated by a set $G$, Theorem~\ref{thm:PBW} provides us a setting 
for calcuating a Gr\"obner--Shirshov basis in the free Poisson algebra.

Namely, every element from the free Poisson algebra $\Pois (G) =P(\Lie (G))$
may be presented as a rewriting rule in the free $A(X)$-module 
generated by a single element 1, where $X$ is the set of nonassociative Lyndon--Shirshov words in the alphabet~$G$.

\begin{corollary}[Composition-Diamond Lemma for Poisson algebras]
If a set $S\subset \Pois (G)$ defines a set of rewriting rules that have no nontrivial compositions  
with \eqref{eq:U3-rulesA} and \eqref{eq:Leibniz}
then $S$ along with the rules mentioned in Theorem~\ref{thm:PBW} is a Gr\"obner--Shirshov basis of $H\otimes \Pois(G\mid S)$ 
considered as a conformal module over $U(\Cur\Lie(G), N=3)$.
\end{corollary}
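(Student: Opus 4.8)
The plan is to deduce the statement from Theorem~\ref{thm:PBW} together with the Composition--Diamond Lemma for modules of \cite{KangLee2000}, the whole difficulty being to control which overlaps can involve a relation of $S$. First I would specialize Theorem~\ref{thm:PBW} to $\mathfrak g=\Lie(G)$: its rules form a Gr\"obner--Shirshov basis $R_0$ of the split null extension $E_0=U(\Cur\Lie(G),N=3)\oplus(H\otimes\Pois(G))$, presented as in the algorithm after Corollary~\ref{cor:GSB_mod} as a module over the ordinary algebra $A$ freely generated by $X\cup\{1\}$, where $X$ is the set of nonassociative Lyndon--Shirshov words. In particular every composition among the rules of $R_0$ is already trivial. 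By the linear-basis Corollary each $f\in S\subset\Pois(G)$ rewrites as a module relation whose leading monomial has the rigid shape $L_1^{b_1}\cdots L_1^{b_k}1$ with $b_1\le\cdots\le b_k$: it ends in the generator $1$, is $\partial$-free, and is built only from the operators $L_1^{b}$.

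Since $R_0$ is complete, it suffices to show that every composition in which at least one participant belongs to $S$ is trivial. I would sort these into algebra--module compositions, whose partner is a defining relation of $A$, and module--module compositions, whose partner is another rule ending in a generator. Every relation of $A$ is one of the families displayed in \eqref{eq:U3-rulesA}; hence each algebra--module composition of an $S$-relation is by definition a composition with \eqref{eq:U3-rulesA} and is trivial by hypothesis. For module--module compositions the partner must end in the same generator $1$, which immediately discards all the $c$-ending rules \eqref{eq:GSB-Ds}, \eqref{eq:GSB-2.2}--\eqref{eq:GSB-0.1} and the relevant parts of \eqref{eq:U3-rulesB}, \eqref{eq:U3-rulesC}. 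Among the surviving $1$-ending rules, $L_n^a1\to0$ $(n\ne1)$ and $L_0^a\partial^s1\to0$ have leading monomials containing $L_0^a$, some $L_{\ge2}^a$, or $\partial$, none of which occur in $L_1^{b_1}\cdots L_1^{b_k}1$; they therefore admit no common extension with an $S$-leading monomial and produce no composition. The only partner left is \eqref{eq:Leibniz}, whose $s=0$ instance has the suffix $L_1^{b_1}\cdots L_1^{b_k}1$ that can coincide with an $S$-leading monomial, and these compositions are trivial by hypothesis.

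There remain the compositions of two relations of $S$. Because both leading monomials end in the single generator $1$ and the module rules fire at that end, any such composition is an inclusion composition, in which one $S$-leading monomial $L_1^{b_1}\cdots L_1^{b_k}1$ is a suffix of another. I would argue that for a reduced $S$ no $S$-leading monomial is a suffix of another, so that no $S$-with-$S$ composition arises at all; when a suffix-inclusion does occur it represents, under the dictionary $L_1^{b}\leftrightarrow$ commutative multiplication and $L_0^{b}\leftrightarrow$ bracket, exactly a classical Poisson composition of \cite{BCZ19}, and its reduction is carried out through left-multiplication by $L_1$-monomials, i.e.\ through \eqref{eq:U3-rulesA} and, once a bracket is produced, \eqref{eq:Leibniz}. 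With every composition shown trivial, the Composition--Diamond Lemma yields that $R_0\cup S$ is a Gr\"obner--Shirshov basis of $E_S=U(\Cur\Lie(G),N=3)\oplus(H\otimes\Pois(G\mid S))$, and Corollary~\ref{cor:GSB_mod} identifies its $1$-containing part, namely $S$ together with the rules of Theorem~\ref{thm:PBW}, as a Gr\"obner--Shirshov basis of the conformal module $H\otimes\Pois(G\mid S)$.

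I expect the main obstacle to be the honest exhaustiveness of the overlap classification together with the treatment of the $S$-with-$S$ compositions. Proving that the rigid ``$L_1$-only, ending in $1$'' shape of the $S$-leading monomials forbids every overlap except with \eqref{eq:U3-rulesA} and the $s=0$ case of \eqref{eq:Leibniz} requires checking each family of rules of $R_0$ individually, and the delicate point is to confirm that the remaining $S$-with-$S$ suffix-inclusions are genuinely captured by --- rather than independent of --- the stated hypothesis, i.e.\ that the module-theoretic compositions of $S$ match the classical Poisson composition notion of \cite{BCZ19} exactly.
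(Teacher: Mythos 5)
Your overall route is the intended one: specialize Theorem~\ref{thm:PBW} to $\mathfrak g=\Lie(G)$, work inside the split null extension, observe that an $S$-rule (leading word $L_1^{b_1}\cdots L_1^{b_k}1$) can only interact with the algebra relations \eqref{eq:U3-rulesA} and with the $1$-ending rule \eqref{eq:Leibniz}, and then extract the module basis via Corollary~\ref{cor:GSB_mod}; that part of your classification is sound, and it is exactly what the paper leaves implicit (it gives no separate proof of this corollary). The genuine gap is the one you flag in your last paragraph and then do not close: the module--module \emph{inclusion} compositions between two elements of $S$. Your option (a) (``for a reduced $S$ no leading monomial is a suffix of another'') appeals to an assumption the corollary does not contain, and inter-reducing $S$ is not a harmless normalization, since it replaces $S$ by a different set whose compositions the hypothesis then constrains differently; your option (b) (such inclusions are ``captured by'' the compositions with \eqref{eq:U3-rulesA} and \eqref{eq:Leibniz}) is false.

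To see that (b) fails, note first that for the hypothesis to be satisfiable at all, the rules attached to $f\in S$ must include their $\partial$-shifts (leading words $L_1^{b_1}\cdots L_1^{b_k}\partial^s 1$, $s\ge 0$), exactly as the rules of Theorem~\ref{thm:PBW} carry all powers $\partial^s$: the overlap of $f$ with $\partial L_1^{b_1}\to L_1^{b_1}\partial-L_0^{b_1}$ produces precisely the $\partial$-shifted copy of $f$, which must reduce to zero. Granting this, take $G=\{x\}$, so that $\Lie(G)=\Bbbk x$, all brackets vanish, $\Pois(G)=\Bbbk[x]$, and every instance of \eqref{eq:Leibniz} reads $L_0^xL_1^x\cdots L_1^x\partial^s1\to 0$; take $S=\{x^3,\ x^2-1\}$, i.e.\ the rules $(L_1^x)^3\partial^s1\to 0$ and $(L_1^x)^2\partial^s1\to\partial^s1$. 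One checks that every composition of these rules with \eqref{eq:U3-rulesA} is trivial: there is no second generator to commute past, the overlaps with $L_n^xL_1^x$ ($n\ge 2$), $R_m^xL_1^x$, $R_mL_1^x$ die against $L_n^x1\to0$, $R_n^x1\to 0$, $R_n1\to 0$, and the overlaps with $\partial L_1^x$ reproduce the shifted rules; every composition with \eqref{eq:Leibniz} is trivial because all brackets are zero. Nevertheless $(L_1^x)^31=L_1^x\cdot(L_1^x)^21$ is an inclusion whose composition equals the irreducible word $L_1^x1\ne 0$, and the conclusion of the corollary fails outright: the Poisson ideal generated by $S$ contains $x=x^3-x(x^2-1)$ and hence $1$, so $H\otimes\Pois(G\mid S)=0$, while the word $1$ is in normal form. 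Hence the $S$--$S$ inclusion compositions are genuinely independent of the stated hypothesis; a correct proof must either add their triviality to the hypothesis, or read ``defines a set of rewriting rules'' as forbidding one $S$-leading word from being a left multiple of another, and in either case this configuration needs an explicit argument rather than the two alternatives you sketch.
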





\end{document}